\newtheorem{thm}{Theorem}[section]
\newtheorem{lemma}[thm]{Lemma}
\newtheorem{conj}[thm]{Conjecture}
\newtheorem{prop}[thm]{Proposition}
\newtheorem{coroll}[thm]{Corollary}
\theoremstyle{definition}
\newtheorem{defin}[thm]{Definition}
\newtheorem{rem}[thm]{Remark}
\newtheorem{exam}[thm]{Example}
\newtheorem{notation}[thm]{Notation}
\newtheorem*{acknow}{Acknowledgements}
\newcommand{\R}{{\mathbb{R}}}
\newcommand{\Z}{{\mathbb{Z}}}
\newcommand{\N}{{\mathbb{N}}}
\newcommand{\cI}{{\mathcal{I}}}
\newcommand{\cQ}{{\mathcal{Q}}}
\newcommand{\cZ}{{\mathcal{Z}}}
\newcommand{\fc}{{:\ }}
\newcommand{\wt}{\widetilde}
\newcommand{\wh}{\widehat}
\newcommand{\tb}{\textbf}
\DeclareMathOperator{\im}{im}
\DeclareMathOperator{\Vol}{Vol}
\DeclareMathOperator{\pr}{pr}
\DeclareMathOperator{\Ham}{Ham}
\DeclareMathOperator{\Int}{Int}
\DeclareMathOperator{\res}{res}
\begin{document}

\title{Hofer continuous quasi-morphisms on Liouville manifolds}

\author{Frol Zapolsky}


\setcounter{tocdepth}{3}

\renewcommand{\labelenumi}{(\roman{enumi})}

\maketitle


\begin{abstract}
   We show that, given a complete Liouville manifold, any homogeneous quasi-morphism on its Hamiltonian group, which satisfies a strengthened version of Hofer continuity called stability, must vanish. This partially addresses a conjecture due to L.\ Polterovich.
\end{abstract}

\section{Background and main result}

A function $\mu \fc G \to \R$, where $G$ is a group, is a quasi-morphism if
$$D(\mu):=\sup_{a,b\in G}|\mu(ab)-\mu(a) - \mu(b)| < \infty\,.$$
The quantity $D(\mu)$ is called the defect of $\mu$. We say that $\mu$ is homogeneous if in addition
$$\mu(a^n) = n\mu(a)\quad \text{for all }a\in G\,, n \in \Z\,.$$

Quasi-morphisms on infinite-dimensional groups of symmetries have been the subject of substantial interest in the last decades, see, for instance, the introduction to \cite{Borman_Zapolsky_QMs_Contacto} and the references therein for a sample. Here we are concerned with quasi-morphisms on the Hamiltonian group of a symplectic manifold, as well as its universal cover. These objects have been a fascinating theme, both in terms of their construction, as well as their applications. Existing constructions of such quasi-morphisms fall into two categories:
\begin{itemize}
  \item \emph{`Soft'} or \emph{geometric}: these utilize classical geometric and analytic tools, and Poincar\'e's rotation number appears as a central theme;
  \item \emph{`Hard'} or \emph{Floer-theoretic}: these ones use properties of solution spaces of elliptic PDEs in the guise of Floer theory, and are extremely involved.
\end{itemize}
Examples of soft constructions are given by Ruelle \cite{Ruelle_Rot_Numbers_Diffeos} for the unit disk in $\R^2$, Barge--Ghys \cite{Barge_Ghys_Cocycles_Euler_Maslov} for the unit ball in $\R^{2n}$, Gambaudo--Ghys \cite{Gambaudo_Ghys_Commutators_Diffeos_Surfaces} for surfaces, Entov \cite{Entov_Commutator_Length_Symplectos} for symplectic Calabi--Yau manifolds of finite volume, Py \cite{Py_QMs_Invariant_Calabi} for monotone symplectic manifolds of finite volume. This theme culminated with a beautiful general construction by Shelukhin \cite{Shelukhin_Action_Hom_QMs_Moment_Maps} for general symplectic manifolds of finite volume. The Calabi homomorphism \cite{Calabi_Group_Auto_Sympl_Mfd}, although not a `true' quasi-morphism, is central to this whole topic, including the present paper, since many homogeneous quasi-morphisms on the Hamiltonian group coincide it when restricted to sufficiently small open subsets. We refer the reader to the introduction of \cite{Shelukhin_Action_Hom_QMs_Moment_Maps} for a pleasant review of the subject of quasi-morphisms on the Hamiltonian group.

Floer-theoretic constructions originate with Entov--Polterovich \cite{Entov_Polterovich_Calabi_QM_QH}, with subsequent extensions and generalizations by Ostrover \cite{Ostrover_Calabi_QMs_Nonmonotone}, Entov--Polterovich \cite{Entov_Polterovich_Sympl_QSs_Semisimplicity_Quantum_Homology} (which also details an observation of McDuff regarding the role of semisimplicity of quantum homology), Caviedes-Castro \cite{Caviedes_Castro_Calabi_QMs_Monotone_Coad_Orbits}, Lanzat \cite{Lanzat_QMs_Sympl_QSs_Cx_Sympl_Mfds}. Monzner--Vichery--Zapolsky \cite{Monzner_Vichery_Zapolsky_Partial_QMs_QSs_Cot_Bundles} construct a function on the Hamiltonian group of a cotangent bundle; thanks to Shelukhin's result \cite{Shelukhin_Viterbo_Conj_Zoll_Symm_Spaces}, its restriction to the Hamiltonian group of each codisk bundle is a quasi-morphism.

Some of the above constructions in fact yield functions on the compactly supported Hamiltonian group (or its universal cover) of a complete Liouville manifold, namely Barge--Ghys ($\R^{2n}$), Entov (Liouville manifolds with $c_1=0$), Shelukhin (any Liouville manifold), and Monzner--Vichery--Zapolsky (cotangent bundles of symmetric Zoll spaces). However those functions only become quasi-morphisms when restricted to the Hamiltonian group of Liouville subdomains, with their defect growing with the size of the domain. By `size' here we mean volume for the first three constructions, and the radius of a codisk bundle for the last one. This naturally leads to the following
\begin{conj}
  The only homogeneous quasi-morphisms on the (universal cover of the) Hamiltonian group of a complete Liouville manifold are multiples of the Calabi homomorphism.
\end{conj}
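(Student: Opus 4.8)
The plan is to prove local rigidity first---that $\mu$ agrees with a fixed multiple of the Calabi homomorphism on every displaceable round ball---and then to globalize by fragmentation. The second step is where the genuine difficulty lies and, I believe, is exactly the reason the conjecture is only partially settled here.

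Write $G=\wt{\Ham}_c(M)$ and let $\mu$ be a homogeneous quasi-morphism on $G$, with defect $D(\mu)$. I would use two elementary consequences of homogeneity: $\mu$ is conjugation invariant, and $\mu$ is additive on pairwise commuting families, i.e. $\mu(u_1\cdots u_k)=\sum_i\mu(u_i)$ whenever the $u_i$ commute. These combine well with the conical end of $M$: since a complete Liouville manifold has infinite volume and is modelled at infinity on the symplectization of its ideal contact boundary, for any round Darboux ball $B$ and any $n\in\N$ there are $n$ pairwise disjoint balls $B_0,\dots,B_{n-1}$, each the image of $B$ under some $\psi_j\in\Ham_c(M)$ (place the copies along the cone at increasing radii). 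Given $a,b\in\Ham_c(B)$, put $a_j=\psi_j a\psi_j^{-1}$ and $b_j=\psi_j b\psi_j^{-1}$; elements with distinct indices have disjoint supports, hence commute, while $\mu(a_j)=\mu(a)$ and $\mu(b_j)=\mu(b)$ by conjugation invariance. With $A=a_0\cdots a_{n-1}$, $B'=b_0\cdots b_{n-1}$ and $P=(a_0b_0)\cdots(a_{n-1}b_{n-1})$, commuting-additivity gives $\mu(A)=n\mu(a)$, $\mu(B')=n\mu(b)$ and $\mu(P)=n\mu(ab)$; on the other hand $P=AB'$ after commuting cross-index factors past one another, so $|n\mu(ab)-n\mu(a)-n\mu(b)|\le D(\mu)$. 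Dividing by $n$ and letting $n\to\infty$ shows $\mu|_{\Ham_c(B)}$ is a homomorphism (the argument lifts verbatim to the cover). By Banyaga's computation of the abelianization of the compactly supported Hamiltonian group of an exact ball, every such homomorphism is a multiple of Calabi, so $\mu|_{\Ham_c(B)}=c_B\,\Cal$.

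Next I would show the constant is independent of the ball. For nested balls the restrictions are compatible and Calabi agrees, forcing $c_B=c_{B'}$; for two disjoint round balls of equal radius, connectedness of $M$ gives $\psi\in\Ham_c(M)$ with $\psi(B)=B'$, and conjugation invariance of both $\mu$ and $\Cal$ again yields $c_B=c_{B'}$. Chaining these moves over all radii produces a single constant $c$. Setting $\nu=\mu-c\,\Cal$ gives a homogeneous quasi-morphism on $G$ that vanishes on $\Ham_c(B)$ for every displaceable ball $B$, and the conjecture becomes the assertion $\nu\equiv0$.

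The hard part is this last step. By Banyaga's fragmentation lemma any $\phi\in\Ham_c(M)$ factors as $\phi=\phi_1\cdots\phi_m$ with each $\phi_i$ supported in a displaceable ball, so $\nu(\phi_i)=0$; but as $\nu$ is merely a quasi-morphism this only gives $|\nu(\phi)|\le(m-1)D(\nu)$, a bound that does not tend to $0$ and is not repaired by passing to powers, since the fragmentation length of $\phi^k$ grows linearly in $k$. The obstruction is intrinsic: a generic compactly supported Hamiltonian diffeomorphism has non-displaceable support (it may wrap around incompressible regions), so one can neither avoid fragmenting nor, in contrast to the local step, arrange the fragments to commute. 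This is precisely the gap between the full conjecture and the theorem proved here, where the stability hypothesis supplies the Hofer-quantitative control that turns the fragmentation estimate into a vanishing statement---and which simultaneously forces $c=0$, since $\Cal$ is not Hofer continuous on an infinite-volume manifold. To attack the conjecture without stability I would try to exploit the Liouville rescaling $\psi^s_\lambda$, under which $\Cal(\psi^s_\lambda\phi\psi^{-s}_\lambda)=e^{(n+1)s}\Cal(\phi)$ (with $\dim M=2n$) while $D(\mu)$ is unchanged, in order to deform $\mu$ inside the fixed-defect family $\mu_s=\mu\circ\mathrm{Ad}_{\psi^s_\lambda}$ and extract a contradiction from the behaviour of $\nu_s$ as $s\to-\infty$; equivalently, one may recast $\nu\equiv0$ as the statement that no compact subset of a complete Liouville manifold is heavy in the sense of Entov--Polterovich, which the open conical end makes plausible but whose proof seems to demand Floer-theoretic input on symplectic cohomology. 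Bridging this gap is, I expect, the central obstacle and the reason the conjecture remains only partially resolved.
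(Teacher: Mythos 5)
The statement you are proving is the paper's \emph{Conjecture}, and the paper itself does not prove it: the paper only establishes the special case of \emph{stable} quasi-morphisms (its Theorem 1.1), where the conclusion is moreover vanishing rather than a multiple of Calabi. Your proposal likewise does not prove the conjecture, and you say so yourself. The local step is essentially correct and standard (the Entov--Polterovich displacement trick plus Banyaga's theorem that the abelianization of the compactly supported Hamiltonian group of an exact manifold is given by the Calabi homomorphism), with one caveat: for an \emph{arbitrary} round Darboux ball $B$ the existence, for every $n$, of $n$ pairwise disjoint images of $B$ under compactly supported Hamiltonian diffeomorphisms is not automatic --- large balls in a Liouville manifold need not be displaceable, let alone infinitely displaceable; the argument is safe only for sufficiently small balls (which is all that fragmentation requires, so the structure survives). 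The genuine gap is exactly where you place it: fragmentation writes $\phi=\phi_1\cdots\phi_m$ with each factor supported in a small ball, so $\nu(\phi_i)=0$, but a quasi-morphism only gives $|\nu(\phi)|\le (m-1)D(\nu)$, and since the fragmentation length of $\phi^k$ grows linearly in $k$, homogenization does not kill this error. Nothing in your outline repairs this, so the proposal is an (honestly labelled) partial reduction, not a proof.

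It is worth noting how your closing speculation relates to what the paper actually does in the stable case. Your idea of deforming $\mu$ through the fixed-defect family $\mu_s=\mu\circ\mathrm{Ad}_{\psi^s_\lambda}$ given by the Liouville rescaling is precisely the engine of the paper's proof of its Theorem 1.1 --- but the implementation there is not via fragmentation at all. Stability makes the functionals $\zeta_s(H)=\mu_s(\wt\phi_H)$ into $1$-Lipschitz, $\Ham$-invariant \emph{quasi-integrals} on $C_c(W)$; the space of such quasi-integrals is compact, the rescaling $\mu_s=e^{-s}\mu(\,\cdot\,^{\Delta_s})$ shrinks the defect so that the Poisson-bracket inequality
$$|\zeta_s(G+H)-\zeta_s(G)-\zeta_s(H)|\le\sqrt{2e^{-s}D(\mu)\|\{G,H\}\|_{C^0}}$$
forces any limit point as $s\to\infty$ to be \emph{linear}, hence by Riesz a $\Ham$-invariant measure, hence a multiple of volume, hence zero by infinite volume and the Lipschitz bound; monotonicity along the Liouville flow then propagates the vanishing back to $\zeta$ and, by stability, to $\mu$. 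So the stable hypothesis is used to trade the fragmentation estimate you are stuck on for a compactness-plus-rigidity argument in the space of quasi-integrals; without stability there is no Lipschitz control, no quasi-integral, and no compactness, which is why the conjecture remains open and why your identification of the fragmentation step as the central obstacle is accurate.
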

\noindent This conjecture was formulated for cotangent bundles by L.\ Polterovich circa 2010 in a discussion of the Monzner--Vichery--Zapolsky construction. Of course, the more general form appearing here also makes sense. In this note we address the case of this conjecture for so-called stable quasi-morphisms. Here stability is meant in the sense of Entov--Polterovich--Zapolsky \cite{EPZ_QMs_Poisson_Bracket}, and we proceed to define it. In what follows, all symplectic manifolds are assumed to have no boundary. If $(M,\omega)$ is a symplectic manifold, $\Ham(M,\omega)$ will stand for its group of Hamiltonian diffeomorphisms generated by compactly supported Hamiltonians, and $\wt\Ham(M,\omega)$ for its universal cover. For $H \in C^\infty_c(M\times [0,1])$ we let $\phi_H^t$ be the time-$t$ Hamiltonian flow generated by $H$, denote $\phi_H=\phi_H^1$, and let $\wt\phi_H \in \wt\Ham(M,\omega)$ be the class of the isotopy $(\phi_H^t)_{t\in[0,1]}$.

A quasi-morphism $\mu \fc \wt\Ham(M,\omega) \to \R$ is called \emph{stable} if it satisfies
$$\int_0^1\min_M(G_t-H_t)\,dt \leq \mu(\wt\phi_G) - \mu(\wt\phi_H) \leq \int_0^1\max_M(G_t-H_t)\,dt$$
for all time-dependent Hamiltonians $G,H$. Stability is a strengthened variant of Lipschitz continuity with respect to Hofer's distance, and it is satisfied by all known versions of Floer-homological spectral invariants, and thus it is a natural assumption on potential quasi-morphisms on the Hamiltonian group. Here is then our result.
\begin{thm}\label{thm:main}
  Let $(W,\omega)$ be a connected complete Liouville manifold. Then any stable homogeneous quasi-morphism on $\wt \Ham(W,\omega)$ vanishes identically.
\end{thm}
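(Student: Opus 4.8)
The plan is to reduce the theorem to a single displacement statement and then play stability against homogeneity to force $\mu$ to vanish. Since every element of $\wt\Ham(W,\omega)$ is of the form $\wt\phi_H$ for some compactly supported time-dependent $H$, it suffices to show $\mu(\wt\phi_H)=0$ for each such $H$. I would first record two standard facts about a homogeneous quasi-morphism $\mu$: it is conjugation-invariant, $\mu(gag^{-1})=\mu(a)$, and it restricts to a homomorphism on every abelian subgroup, so that $\mu(a_1\cdots a_N)=\sum_i\mu(a_i)$ whenever the $a_i$ pairwise commute. The engine of the proof is that Hamiltonians with pairwise disjoint supports generate commuting elements, while stability bounds $\mu$ by the $C^0$-size of the generating Hamiltonian — a size that does \emph{not} grow when one passes to disjoint copies.

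Concretely, let $K\subset W$ be the (compact) projection of $\supp H$, and suppose one can produce, for every $N$, diffeomorphisms $\theta_1=\id,\theta_2,\dots,\theta_N\in\Ham(W,\omega)$ with the sets $\theta_i(K)$ pairwise disjoint. Lifting each $\theta_i$ to $\wt\theta_i\in\wt\Ham(W,\omega)$ and conjugating, we have $\wt\theta_i\wt\phi_H\wt\theta_i^{-1}=\wt\phi_{H\circ\theta_i^{-1}}$, where $H\circ\theta_i^{-1}$ denotes $(H_t\circ\theta_i^{-1})_t$ and is supported in $\theta_i(K)$. Conjugation invariance gives $\mu(\wt\phi_{H\circ\theta_i^{-1}})=\mu(\wt\phi_H)$; disjointness of the supports makes these elements commute, with product $\wt\phi_G$ where $G_t=\sum_i H_t\circ\theta_i^{-1}$. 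Hence
\[
N\,\mu(\wt\phi_H)=\sum_{i=1}^N\mu(\wt\phi_{H\circ\theta_i^{-1}})=\mu(\wt\phi_G).
\]
Applying stability to $G$ and the zero Hamiltonian (and using $\mu(\id)=0$) yields $\int_0^1\min_W G_t\,dt\le\mu(\wt\phi_G)\le\int_0^1\max_W G_t\,dt$. Because the $\theta_i(K)$ are disjoint, at each time and each point at most one summand of $G_t$ is nonzero, so $\max_W G_t=\max(0,\max_W H_t)$ and $\min_W G_t=\min(0,\min_W H_t)$, both independent of $N$. Thus $N\mu(\wt\phi_H)$ lies in a fixed bounded interval for every $N$, which forces $\mu(\wt\phi_H)=0$, and therefore $\mu\equiv0$.

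The remaining task, and where I expect the genuine difficulty to lie, is the displacement input. The idea is to use the Liouville structure: writing $\omega=d\lambda$ and letting $X$ be the Liouville field ($\iota_X\omega=\lambda$), completeness guarantees the flow $\phi_X^s$ is defined for all $s$ with $(\phi_X^s)^*\omega=e^s\omega$, so away from the skeleton it pushes everything out to infinity and supplies arbitrarily many radially separated copies of $K$. The subtlety is that $\phi_X^s$ is conformal rather than Hamiltonian — conjugating by it rescales the generating Hamiltonian by $e^s$, which would wreck the $N$-independence above — so the separated copies must be realized by genuine compactly supported Hamiltonian isotopies $\theta_i$. I would proceed in two stages: first move $K$ into the conical end $W\setminus\mathrm{Skel}$ by a Hamiltonian isotopy (feasible since the skeleton is nowhere dense with connected complement and ample room), and then spread $N$ disjoint copies into successive regions of the end, exploiting that translation-type motions increasing the Liouville coordinate are Hamiltonian even though the pure radial field $X$ is not; completeness of $X$ is precisely what provides the unbounded room to fit $N$ copies for every $N$. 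The hard part will be making this displacement lemma precise for a general, possibly infinite-type, complete Liouville manifold with a complicated skeleton — in particular constructing the compactly supported Hamiltonian isotopies that detach $K$ from the skeleton and then realize the radial separation — whereas the quasi-morphism bookkeeping above is routine once the disjoint copies are in hand.
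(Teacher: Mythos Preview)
Your quasi-morphism bookkeeping is fine, but the displacement input you need is simply false on many Liouville manifolds, and this is not a technicality that can be patched. Take $W=T^*S^1$, the cylinder, and let $K$ be a compact annulus containing the zero section. Any Hamiltonian image $\theta(K)$ contains a Hamiltonian image of the zero section, and by Lagrangian intersection (here an elementary flux argument) two Hamiltonian images of the zero section must meet. Hence no two Hamiltonian translates of $K$ can be disjoint, let alone $N$ of them. The same obstruction occurs in any $T^*M$: a neighborhood of the zero section is Hamiltonianly non-displaceable. Your proposed first step, ``move $K$ into the conical end $W\setminus\mathrm{Skel}$ by a Hamiltonian isotopy,'' is blocked for exactly this reason; and your heuristic that ``translation-type motions increasing the Liouville coordinate are Hamiltonian'' already fails on the cylinder, where $\partial_p$ has flux $\iota_{\partial_p}\omega=dq$, which is closed but not exact. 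In effect your argument recovers the known stabilized/subcritical case (where displacement is available via an $\R^2$ factor, cf.\ Kotschick), but not the general theorem.

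The paper avoids displacement entirely. It uses the Liouville flow $\Delta_s$ \emph{conformally}: conjugating $\mu$ by $\Delta_s$ and rescaling produces a one-parameter family of stable homogeneous quasi-morphisms $\mu_s$ with defect $e^{-s}D(\mu)$. The associated functionals $\zeta_s(H)=\mu_s(\wt\phi_H)$ are $\Ham$-invariant $1$-Lipschitz quasi-integrals on $C_c(W)$; this space is compact in the pointwise topology, so some subnet $\zeta_{s_\nu}$ converges to a limit $\eta$. The Entov--Polterovich--Zapolsky inequality bounds the additivity defect of $\zeta_s$ by $\sqrt{2e^{-s}D(\mu)\|\{G,H\}\|}$, so $\eta$ is genuinely linear, hence (Riesz) integration against a $\Ham$-invariant Radon measure, hence a multiple of the symplectic volume, hence zero since $W$ has infinite volume and $\eta$ is $1$-Lipschitz. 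Finally, monotonicity of $s\mapsto\zeta_s(\chi_k)$ for suitable cutoff functions $\chi_k$ forces $\zeta(\chi_k)=0$, and stability then squeezes $\mu(\wt\phi_H)$ between $\zeta(a\chi_k)$ and $\zeta(b\chi_k)$, both zero. The conformal rescaling is doing the work that your disjoint copies were meant to do, but without ever needing a Hamiltonian displacement.
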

\noindent Informally, this can be interpreted as the inability of any kind of Floer theory to produce quasi-morphisms on Liouville manifolds. By a Liouville manifold here is meant an exact symplectic manifold $(W,\omega=d\lambda)$ such that the associated Liouville vector field $Y$, given by $\iota_Y\omega=\lambda$, is complete, and such that $W$ is exhausted by compact Liouville subdomains, that is there is a sequence $(W_k)_k$ of compact codimension zero submanifolds with boundary such that $W=\bigcup_k W_k$, and such that for each $k$ we have $W_k\subset\Int W_{k+1}$, and $Y$ is positively transverse to $\partial W_k$.
\begin{rem}
  \begin{itemize}
    \item Note that the conclusion of the theorem is vanishing rather than being a multiple of the Calabi homomorphism. The reason is that the latter is not stable---it grows with the volume of the support---and therefore its only possible stable multiple is zero.
    \item The result here is for quasi-morphisms on the universal cover of the Hamiltonian group, which subsumes the case of quasi-morphisms on the group itself, since, of course, any quasi-morphism on the Hamiltonian group can be lifted to the universal cover by the projection $\wt\Ham \to \Ham$. Note that some of the above constructions yield quasi-morphisms on the Hamiltonian group itself, while others apply to its universal cover.
  \end{itemize}
\end{rem}

\noindent\tb{Discussion.} We wish to emphasize that there are examples of open symplectic manifolds of infinite volume whose Hamiltonian group admits a nonzero stable homogeneous quasi-morphism. Namely, Lanzat's construction \cite{Lanzat_QMs_Sympl_QSs_Cx_Sympl_Mfds} yields such quasi-morphisms for $1$-point blowups of certain cotangent bundles. Also, there is a general result regarding quasi-morphisms for `large' symplectic manifolds due to Kotschick \cite{Kotschick_Stable_Length_Stable_Groups}. It applies, for instance, to symplectic manifolds which are stabilizations, meaning they have the form $W=\R^2\times W'$ for some symplectic manifold $W'$. For such $W$ one can apply the results of \cite{Kotschick_Stable_Length_Stable_Groups} to deduce that only homogeneous quasi-morphisms on the Hamiltonian group of $W$ are indeed multiples of the Calabi homomorphism. In particular, this applies to subcritical Weistein manifolds, since by \cite[Theorem 14.16]{Cieliebak_Eliashberg}, such a manifold is symplectomorphic to a stabilization. It follows that our result is new only for those Liouville manifolds which are not symplectomorphic to subcritical Weinstein manifolds.

\begin{acknow}I would like to thank Petya Pushkar' and Leonid Polterovich for listening to a preliminary version of this result, and for their interest. My thanks also go to Igor Uljarevi\'c and Filip \v Zivanovi\'c for a discussion about Liouville manifolds of finite type, which ultimately allowed me to prove the result for general Liouville manifolds.
\end{acknow}

\section{Proof}

The proof crucially relies on the concept of \emph{quasi-integrals}, which are certain, not necessarily linear, functionals on the algebra of compactly supported real-valued functions of a locally compact Hausdorff space. Quasi-integrals are the subject of Section \ref{ss:QIs}, where we define them and prove a number of fundamental results about them. In Section \ref{ss:proof} we prove the theorem.

\noindent\tb{Overview of the proof.} Given a homogeneous quasi-morphism $\mu \fc \wt\Ham(W,\omega)\to\R$, we can define a functional on $C^\infty_c(W)$ via $H\mapsto \mu(\wt\phi_H)$. Thanks to the stability of $\mu$, this functional is $1$-Lipschitz with respect to the uniform norm, and thus admits a unique extension $\zeta \fc C_c(W)\to\R$, where $C_c(W)$ is the space of compactly supported continuous functions, which turns out to be a quasi-integral, and in particular it is monotone. Moreover, since $\mu$ is homogeneous, and thus conjugation-invariant, $\zeta$ is invariant under the natural action of $\Ham(W,\omega)$. The key point here is that the space of $1$-Lipschitz $\Ham(W,\omega)$-invariant quasi-integrals is \emph{compact}. Since $W$ is a Liouville manifold, the Liouville vector field $Y$ is complete, and its flow $\Delta_s$ induces, via a natural action, a $1$-parameter family of quasi-integrals $\zeta_s$ obtained from $\zeta$. It turns out that the $\zeta_s$ are also $1$-Lipschitz and $\Ham$-invariant, and therefore the family $(\zeta_s)_{s\geq 0}$ has a limit point $\eta$ as $s\to\infty$, which is also a $1$-Lipschitz $\Ham$-invariant quasi-integral. The functionals $\zeta_s$ are in general nonlinear, however, as it turns out, their nonlinearity is controlled:
$$|\zeta_s(G+H) - \zeta_s(G) - \zeta_s(H)| \leq \sqrt{2e^{-s}D(\mu)\|\{G,H\}\|_{C^0}}\,.$$
It follows that $\eta$, being a limit point of the $\zeta_s$ as $s\to\infty$, is \emph{linear}. Linear quasi-integrals are given, thanks to the Riesz representation theorem, by integration against a Borel measure. Since $\eta$ is $\Ham$-invariant, the measure representing $\eta$ must be a multiple of the volume. The only such multiple, which is also $1$-Lipschitz with respect to the uniform norm, is zero. Thus the only limit point of $(\zeta_s)_{s\geq 0}$ is the zero functional. We can then combine this with the monotonicity property of the $\zeta_s$ to deduce that $\zeta$ vanishes on a sufficiently rich family of functions, which suffices in order to conclude, again using stability, that $\mu\equiv 0$.

\begin{rem}
  Quasi-integrals arising from homogeneous quasi-morphisms as described above possess an additional property---they are \emph{symplectic} (see \cite{Monzner_Zapolsky_Comparison_Sympl_Homogenization_Calabi_QSs}), that is they are linear on Poisson-commuting functions. It is tempting to surmise that there are no symplectic quasi-integrals on Liouville manifolds, however it is futile, since they do exist, at least on cotangent bundles of symmetric Zoll spaces. Such quasi-integrals are given by the construction in \cite{Monzner_Vichery_Zapolsky_Partial_QMs_QSs_Cot_Bundles}. There is no contradiction to our main result, of course, since these quasi-integrals are not induced from quasi-morphisms on the Hamiltonian group.
\end{rem}

Let us now describe all of this in detail.

\subsection{Quasi-integrals}\label{ss:QIs}

Given a locally compact Hausdorff space $X$, let $C(X)$ be the space of real-valued continuous functions on $X$ and let $C_c(X)$ be the subspace of functions with compact support. The uniform norm on $C_c(X)$ is given by $\|f\|_{C^0} = \max_{x \in X}|f(x)|$. For $f \in C_c(X)$ we let $C(f)\subset C_c(X)$ be the smallest closed subalgebra containing $f$. Explicitly, $C(f) = \{\phi\circ f\,|\,\phi \in C(\R)\,,\phi(0)=0\}$.

A quasi-integral on $X$ is a functional $\eta \fc C_c(X) \to \R$ satisfying
\begin{itemize}
  \item \tb{(positivity):} For all $f \in C_c(X)$, $f\geq 0\Rightarrow\eta(f) \geq 0$;
  \item \tb{(quasi-linearity):} For all $f \in C_c(X)$, $\eta$ is linear on $C(f)$.
\end{itemize}
Quasi-integrals have been introduced in this generality in Rustad \cite{Rustad_Unbounded_QIs}, and independently rediscovered in \cite{Monzner_Zapolsky_Comparison_Sympl_Homogenization_Calabi_QSs}. They are a generalization of Aarnes's notion of quasi-states \cite{Aarnes_QS_QM}, which are a special case for compact spaces. Rustad showed in \cite{Rustad_Unbounded_QIs} that for each compact $K\subset X$ there is $C_K\geq 0$ so that $\eta$ is $C_K$-Lipschitz with respect to the uniform norm on the subspace of functions with support in $K$, namely if $f,g\in C_c(X)$ are supported in $K$, then $|\eta(f) - \eta(g)| \leq C_K\|f-g\|_{C^0}$. If a common value of $C_K$ exists for all compact $K$, then $\eta$ is globally Lipschitz. We will only deal with $1$-Lipschitz quasi-integrals, as those are the ones arising from stable quasi-morphisms.

\begin{exam}\label{ex:linear_QIs}
  If $\sigma$ is a compact-finite Borel measure on $X$, then $\eta(f)=\int_X f\,d\sigma$ defines a globally linear quasi-integral. Conversely, the Riesz representation theorem implies that in case $X$ is $\sigma$-compact, that is a countable union of compact subsets, then for each linear quasi-integral $\eta \fc C_c(X)\to\R$ there exists a unique Radon (that is a compact-finite regular Borel) measure $\sigma$, such that for all $f \in C_c(X)$ we have $\eta(f) = \int_Xf\,d\sigma$. In this case $\eta$ is $C$-Lipschitz for some $C\geq 0$ if and only if $\sigma(X) \leq C$.
\end{exam}

We will now formulate and prove a number of properties of quasi-integrals which will be used in the proof of Theorem \ref{thm:main}.
\begin{notation}
  The collection of $1$-Lipschitz quasi-integrals on $X$ will be denoted by $\cQ\cI_1(X)$.
\end{notation}

For the rest of the paper, we endow the collection of quasi-integrals on $X$, as well as all its subspaces, with the topology of pointwise convergence, whereby a net $\eta_\nu$ of quasi-integrals converges to $\eta$ if for all $f \in C_c(X)$ we have $\eta_\nu(f) \to \eta(f)$. We have the following fundamental property.
\begin{prop}\label{prop:compactness}
  The space $\cQ\cI_1(X)$ is compact.
\end{prop}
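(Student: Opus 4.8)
The plan is to realize $\cQ\cI_1(X)$ as a closed subspace of a Tychonoff product of compact intervals, so that compactness follows from Tychonoff's theorem. This is precisely the nonlinear analogue of the Banach--Alaoglu argument for the unit ball of a dual space.

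First I would establish a uniform a priori bound on $\eta(f)$. From quasi-linearity, the restriction of $\eta$ to each subalgebra $C(f)$ is a genuine linear functional, and since $0 = 0\cdot f \in C(f)$ we get $\eta(0) = \eta(0) + \eta(0)$, hence $\eta(0) = 0$. Combined with the $1$-Lipschitz property this yields $|\eta(f)| = |\eta(f) - \eta(0)| \leq \|f\|_{C^0}$ for every $f \in C_c(X)$ and every $\eta \in \cQ\cI_1(X)$. Thus the evaluation map $\eta \mapsto \eta(f)$ takes values in the compact interval $I_f := [-\|f\|_{C^0}, \|f\|_{C^0}]$, and the assignment $\eta \mapsto (\eta(f))_{f \in C_c(X)}$ embeds $\cQ\cI_1(X)$ into the product $P := \prod_{f \in C_c(X)} I_f$. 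By definition the topology of pointwise convergence on $\cQ\cI_1(X)$ is the subspace topology inherited from the product topology on $P$, and $P$ is compact by Tychonoff's theorem, so it remains only to verify that the image of $\cQ\cI_1(X)$ is closed in $P$.

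The key point here is that each of the three defining conditions cuts out a closed subset of $P$, since for fixed arguments the coordinate evaluations are continuous on $P$. Explicitly, positivity is the intersection over all $f \geq 0$ of the closed sets $\{\eta : \eta(f) \geq 0\}$; the $1$-Lipschitz condition is the intersection over all pairs $f,g \in C_c(X)$ of the closed sets $\{\eta : |\eta(f) - \eta(g)| \leq \|f-g\|_{C^0}\}$; and quasi-linearity is the intersection, over all $f \in C_c(X)$, all $u,v \in C(f)$, and all scalars $a,b \in \R$, of the closed sets $\{\eta : \eta(au+bv) = a\eta(u) + b\eta(v)\}$, where one uses that $au+bv$ again lies in the subalgebra $C(f) \subseteq C_c(X)$ so that the evaluation is well defined. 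As an arbitrary intersection of closed sets, the image of $\cQ\cI_1(X)$ is closed in $P$, and a closed subset of a compact space is compact.

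The only mild subtlety, and the step I would be most careful about, is the treatment of quasi-linearity, precisely because it is the condition responsible for $\eta$ being nonlinear in general: one must phrase it as linearity on each individual subalgebra $C(f)$ rather than as global linearity, and check that the relevant linear combinations remain inside $C(f)$, so that expressing it as an intersection of closed coordinate conditions is legitimate. Everything else is the standard packaging of compactness via Tychonoff's theorem.
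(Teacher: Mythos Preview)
Your proof is correct. Both you and the paper run a Tychonoff/Banach--Alaoglu style argument, but the packaging differs in an instructive way. The paper embeds $\cQ\cI_1(X)$ into $\prod_{f\in C_c(X)} B_1(f)_+$, where each factor is the positive unit ball in the dual $C(f)'$, compact by Banach--Alaoglu; quasi-linearity and positivity are then absorbed into the factors, and the closed conditions one must check are \emph{coherence relations} $\rho_f|_{C(g)} = \rho_g$ for $g \in C(f)$. You instead embed into the cruder product $\prod_f I_f$ of scalar intervals and verify directly that positivity, the $1$-Lipschitz bound, and quasi-linearity each cut out closed subsets. Your route is the more elementary one---it avoids dual spaces altogether and reduces everything to Tychonoff on intervals---while the paper's version has the conceptual advantage of isolating the genuinely linear pieces $\eta|_{C(f)}$ and making explicit that the nonlinearity of a quasi-integral is encoded precisely in how these linear pieces glue.
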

\noindent This result is clearly known to the experts. However, extracting it in this exact form from the existing literature is somewhat challenging, so we elected to present a self-contained proof, which appears at the end of this section. The main idea is a Banach--Alaoglu argument.

For the next property, note that homeomorphisms of $X$ naturally act on $C_c(X)$, and thus on quasi-integrals by precomposition. More precisely, if $g$ is a homeomorphism of $X$ and $\eta$ is a quasi-integral on $X$, then $g_*\eta \fc C_c(X) \to \R$ defined by $(g_*\eta)(f) = \eta(f\circ g^{-1})$ is likewise a quasi-integral. It is easy to see that the Lipschitz property with respect to the uniform norm is preserved by this action, as is the Lipschitz constant. It follows that homeomorphisms act on $\cQ\cI_1(X)$. We then have
\begin{coroll}
  If $G$ is a subgroup of the group of homeomorphisms of $X$, then the subspace of $\cQ\cI_1(X)$ consisting of $G$-invariant quasi-integrals is closed and thus likewise compact.
\end{coroll}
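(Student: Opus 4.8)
The plan is to reduce the statement to two elementary observations: that the $G$-invariant subspace is an intersection of closed sets, and that a closed subspace of a compact space is compact. Since $\cQ\cI_1(X)$ is compact by Proposition \ref{prop:compactness}, it suffices to prove that the subspace of $G$-invariant quasi-integrals is closed in the topology of pointwise convergence.

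First I would record that, for each fixed $f \in C_c(X)$, the evaluation functional $\eta \mapsto \eta(f)$ is continuous by the very definition of the topology of pointwise convergence. Consequently, for each $g \in G$ and each $f \in C_c(X)$, the two maps $\eta \mapsto \eta(f)$ and $\eta \mapsto (g_*\eta)(f) = \eta(f\circ g^{-1})$ are both continuous (the latter is again an evaluation, namely at the function $f\circ g^{-1} \in C_c(X)$). Their difference is therefore a continuous real-valued function on $\cQ\cI_1(X)$, and hence the set
$$A_{g,f} := \{\eta \in \cQ\cI_1(X)\,|\,\eta(f) = \eta(f\circ g^{-1})\}$$
is closed, being the preimage of the closed singleton $\{0\}\subset\R$ under this difference.

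Next I would observe that a quasi-integral $\eta$ is $G$-invariant, i.e. $g_*\eta = \eta$ for all $g \in G$, precisely when $\eta(f) = (g_*\eta)(f)$ for all $g \in G$ and all $f \in C_c(X)$. Thus the subspace of $G$-invariant quasi-integrals equals the intersection $\bigcap_{g\in G}\bigcap_{f\in C_c(X)} A_{g,f}$, which, as an arbitrary intersection of closed sets, is closed. A closed subspace of a compact space is compact, which yields the claim.

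There is, in truth, no serious obstacle here: the entire argument rests on the fact that the topology is defined so as to make all evaluation functionals continuous, and the $G$-invariance condition is by its nature a coincidence of such evaluations. The only point worth care is to express invariance as a \emph{family} of scalar equations indexed by pairs $(g,f)$, rather than as the single equation $g_*\eta = \eta$ in the space of functionals, so that closedness follows directly from the continuity of real-valued evaluation maps.
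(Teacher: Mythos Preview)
Your proof is correct and follows essentially the same approach as the paper: express $G$-invariance as an intersection of closed fixed-point sets and invoke the compactness of $\cQ\cI_1(X)$. The only difference is that the paper intersects over $g\in G$ the sets $\{\eta:g_*\eta=\eta\}$ and declares each closed without further comment, whereas you carry out the additional (but routine) step of writing each such set as $\bigcap_{f\in C_c(X)}A_{g,f}$ to make the closedness explicit via continuity of evaluations.
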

\begin{proof}
  It is easy to see that for $g\in G$, the subspace $\{\eta \in \cQ\cI_1(X)\,|\,g_*\eta=\eta\}$ is closed. The result now follows since the intersection of all these subspaces as $g$ ranges over $G$ is precisely the subspace of $G$-invariant quasi-integrals, which is then closed.
\end{proof}

\begin{defin}
  If $(M,\omega)$ is a symplectic manifold, a quasi-integral on $M$ is called $\Ham$-invariant if it is invariant under $\Ham(M,\omega)$.
\end{defin}

\begin{coroll}\label{cor:Ham_invt_QIs_compact}
  If $(M,\omega)$ is a symplectic manifold, then the collection of $\Ham$-invariant $1$-Lipschitz quasi-integrals on $M$ is compact. \qed
\end{coroll}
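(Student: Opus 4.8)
The plan is to apply the preceding corollary directly, taking $X = M$ and $G = \Ham(M,\omega)$. First I would observe that the general setup of quasi-integrals applies verbatim to $M$: as a smooth manifold, $M$ is locally compact and Hausdorff, so the space $C_c(M)$, the collection $\cQ\cI_1(M)$, and in particular the compactness statement of Proposition \ref{prop:compactness} are all available without modification.

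Next, I would note that every Hamiltonian diffeomorphism is a diffeomorphism, hence in particular a homeomorphism of $M$, so that $\Ham(M,\omega)$ is a subgroup of the group of homeomorphisms of $M$. Under this inclusion, the $\Ham$-invariant $1$-Lipschitz quasi-integrals in the sense of the definition above are exactly the $G$-invariant elements of $\cQ\cI_1(M)$ for $G = \Ham(M,\omega)$, since the action of $g \in \Ham(M,\omega)$ on quasi-integrals is the same one defined above by $(g_*\eta)(f) = \eta(f \circ g^{-1})$. The preceding corollary then tells us that the subspace of $G$-invariant quasi-integrals is closed in $\cQ\cI_1(M)$; as the ambient space is compact by Proposition \ref{prop:compactness}, a closed subspace is again compact, which is the assertion.

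I expect no genuine obstacle here: the statement is simply the specialization of the preceding corollary to the subgroup of Hamiltonian diffeomorphisms, which is why it can be dispatched at once. The only points to check, both entirely routine, are that $M$ meets the standing hypotheses on $X$ (local compactness and the Hausdorff property) and that $\Ham(M,\omega)$ genuinely acts by homeomorphisms, and these are immediate.
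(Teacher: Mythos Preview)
Your proposal is correct and matches the paper's approach exactly: the corollary is an immediate specialization of the preceding one with $X=M$ and $G=\Ham(M,\omega)$, which is why the paper dispatches it with a bare \qed. The only checks are that $M$ is locally compact Hausdorff and that $\Ham(M,\omega)$ acts by homeomorphisms, both of which you note and both of which are trivial.
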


We will need the following properties of quasi-integrals in the context of quasi-morphisms on the Hamiltonian group.
\begin{prop}\label{prop:QIs_from_QMs}
  If $(W,\omega)$ is an open connected symplectic manifold, and $\mu \fc \wt\Ham_c(W,\omega) \to \R$ is a stable homogeneous quasi-morphism, then the functional $f \mapsto \mu(\wt\phi_f)$ has a unique extension to a $\Ham$-invariant $1$-Lipschitz quasi-integral $\zeta \fc C_c(W) \to \R$, which moreover satisfies the inequality
    \begin{equation}\label{eq:main_ineq}
      |\zeta(f+g) - \zeta(f) - \zeta(g)| \leq \sqrt{2D(\mu)\|\{f,g\}\|_{C^0}}\quad\text{for }f,g \in C^\infty_c(W)\,.
    \end{equation}
\end{prop}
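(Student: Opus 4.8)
The plan is to build $\zeta$ in the evident way and verify each defining property one at a time, reserving the Poisson-bracket inequality for last. First I would set $\zeta(f):=\mu(\wt\phi_f)$ for $f\in C^\infty_c(W)$, where $\wt\phi_f$ is generated by the time-independent Hamiltonian $f$. Applying stability to the pair of time-independent Hamiltonians $(f,g)$ gives $\min_W(f-g)\le\mu(\wt\phi_f)-\mu(\wt\phi_g)\le\max_W(f-g)$, whence $|\zeta(f)-\zeta(g)|\le\|f-g\|_{C^0}$; thus $\zeta$ is $1$-Lipschitz on $C^\infty_c(W)$ for the uniform norm. Since $C^\infty_c(W)$ is uniformly dense in $C_c(W)$ (with supports in a fixed compact set), $\zeta$ extends uniquely to a $1$-Lipschitz functional on $C_c(W)$. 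Homogeneity gives $\mu(\id)=0$, and stability yields $\zeta(g)-\zeta(f)\ge\min_W(g-f)$, hence monotonicity and in particular the positivity axiom $f\ge0\Rightarrow\zeta(f)\ge0$.

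Next, for quasi-linearity I would exploit that the Hamiltonians $u\circ f$, for $u\in C^\infty(\R)$ with $u(0)=0$, are all functions of the single Hamiltonian $f$ and therefore have pairwise commuting flows which preserve $f$. The composition formula for Hamiltonian flows then gives $\wt\phi_{(u+v)\circ f}=\wt\phi_{u\circ f}\,\wt\phi_{v\circ f}$, so $u\mapsto\wt\phi_{u\circ f}$ is a homomorphism into $\wt\Ham_c(W,\omega)$ with abelian image. A homogeneous quasi-morphism is additive on any pair of commuting elements (from $|\mu((ab)^n)-\mu(a^n)-\mu(b^n)|\le D(\mu)$ and homogeneity, letting $n\to\infty$), so $\zeta$ is additive on $\{u\circ f\}$; together with $\zeta(c\,u\circ f)=c\,\zeta(u\circ f)$ and $1$-Lipschitz continuity this makes $\zeta$ linear on the smooth functions of $f$, and then on all of $C(f)$ by uniform approximation of the continuous composing functions. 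Approximating a continuous $f$ by smooth functions with a fixed compact support, and using continuity of $\zeta$, promotes quasi-linearity to arbitrary $f\in C_c(W)$. For $\Ham$-invariance I would use that $\wt\phi_{f\circ g^{-1}}=\wt g\,\wt\phi_f\,\wt g^{-1}$ for any lift $\wt g$ of $g\in\Ham(W,\omega)$, since $g$ conjugates the flow of $f$ to that of $f\circ g^{-1}$; conjugation-invariance of the homogeneous quasi-morphism $\mu$ then gives $\zeta(f\circ g^{-1})=\zeta(f)$ on smooth $f$, and hence on all of $C_c(W)$.

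The heart of the matter is inequality \eqref{eq:main_ineq}, and this is the step I expect to be the main obstacle. The composition formula shows that $\wt\phi_f\,\wt\phi_g$ is generated by the time-dependent Hamiltonian $K_t=f+g\circ(\phi_f^t)^{-1}$, whereas $\wt\phi_{f+g}$ is generated by $f+g$. Since $\tfrac{d}{dt}\big(g\circ(\phi_f^t)^{-1}\big)=-\{f,g\}\circ(\phi_f^t)^{-1}$ up to the sign convention for the Poisson bracket, and this has uniform norm $\|\{f,g\}\|_{C^0}$, we get $\|g-g\circ(\phi_f^t)^{-1}\|_{C^0}\le t\,\|\{f,g\}\|_{C^0}$, so stability applied to the pair $(f+g,K)$ bounds $|\mu(\wt\phi_{f+g})-\mu(\wt\phi_f\wt\phi_g)|$ by $\int_0^1 t\,\|\{f,g\}\|_{C^0}\,dt=\tfrac12\|\{f,g\}\|_{C^0}$. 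Combining with the defect estimate $|\mu(\wt\phi_f\wt\phi_g)-\mu(\wt\phi_f)-\mu(\wt\phi_g)|\le D(\mu)$ yields $|\zeta(f+g)-\zeta(f)-\zeta(g)|\le D(\mu)+\tfrac12\|\{f,g\}\|_{C^0}$. This is not yet the claimed bound; to sharpen it I would run a scaling argument. Replacing $f,g$ by $\lambda f,\lambda g$ for $\lambda>0$, using $\{\lambda f,\lambda g\}=\lambda^2\{f,g\}$ together with the degree-one homogeneity of $\zeta$, and dividing by $\lambda$, gives $|\zeta(f+g)-\zeta(f)-\zeta(g)|\le \lambda^{-1}D(\mu)+\tfrac{\lambda}{2}\|\{f,g\}\|_{C^0}$; optimizing over $\lambda$ (taking $\lambda=\sqrt{2D(\mu)/\|\{f,g\}\|_{C^0}}$) produces exactly $\sqrt{2D(\mu)\|\{f,g\}\|_{C^0}}$, with the Poisson-commuting case $\|\{f,g\}\|_{C^0}=0$ recovered by letting $\lambda\to\infty$. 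The delicate points are getting the composition formula and its sign conventions right, and ensuring the scaling step is legitimate, i.e. that $\zeta$ is genuinely homogeneous of degree one — which is precisely why the quasi-linearity step must already be in place before this estimate is carried out.
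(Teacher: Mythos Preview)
Your proof is correct and follows the same broad outline as the paper's, with two differences worth noting. First, the paper does not prove inequality \eqref{eq:main_ineq} but simply cites it from \cite{EPZ_QMs_Poisson_Bracket}; you supply a complete self-contained argument via the composition formula, stability, and the scaling-and-optimization trick, which is essentially the EPZ proof. Second, the logical order is reversed: the paper first invokes the inequality and then uses the special case $\{f,g\}=0$ to obtain additivity on Poisson-commuting functions, deducing quasi-linearity by approximation; you instead establish quasi-linearity directly from the identity $\wt\phi_{(u+v)\circ f}=\wt\phi_{u\circ f}\,\wt\phi_{v\circ f}$ (valid because the Hamiltonian vector fields of functions of $f$ commute) together with the additivity of a homogeneous quasi-morphism on commuting elements, and only afterwards turn to the inequality. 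Your ordering has the virtue of making the degree-one homogeneity of $\zeta$ available before the scaling step, so the optimization over $\lambda$ is cleanly justified; the paper's ordering is terser on the page but leans on the external reference.
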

\noindent This is a straightfoward adaptation of \cite[Section 2.5.4]{Zapolsky_PhD}, however, for the sake of completeness and ease of reading, we will present an abridged proof here.
\begin{proof}
  First, let $\zeta \fc C^\infty_c(W) \to \R$ be defined by $\zeta(f) = \mu(\wt\phi_f)$. The stability of $\mu$ implies that $\zeta$ is $1$-Lipschitz with respect to the uniform norm, and thus, by a standard argument, admits a unique $1$-Lipschitz extension $\zeta \fc C_c(W) \to \R$. Again, by stability, this extension is positive, that is $\zeta(f) \geq 0$ if $f \geq 0$. The $\Ham$-invariance is a straightforward consequence of the conjugation invariance of homogeneous quasi-morphisms. The inequality \eqref{eq:main_ineq} is proved in \cite{EPZ_QMs_Poisson_Bracket} and we will not reproduce it here. There it is proved for compact manifolds, but the proof carries over \emph{verbatim} to compactly supported Hamiltonians on an arbitrary symplectic manifold.

  It remains to prove that $\zeta$ is quasi-linear. This is done by an approximation argument. For $f \in C^\infty_c(W)$, the space $\{\phi\circ f\,|\,\phi\in C^\infty(\R)\,,\phi(0)\}$ consists of pairwise Poisson-commuting functions, and thus $\zeta$ is linear on it, thanks to the inequality. If $f \in C_c(W)$, let $f_n \in C_c^\infty(W)$ be a sequence of smooth functions with support contained in a fixed compact set, such that $\|f_n-f\|_{C^0} \to 0$. For any $g,h\in C(f)$ let $\phi,\psi\in C(\R)$ be such that $\phi(0) = \psi(0)=0$ and $g=\phi\circ f$, $h=\psi\circ f$. Let $I\subset \R$ be a finite closed interval containing $\im f$ and $\im f_n$ for all $n$. Let $\phi_n,\psi_n \in C^\infty(\R)$ satisfy $\phi_n(0) = \psi_n(0) = 0$ and $\|\phi_n - \phi\|_{C^0(I)}, \|\psi_n-\psi\|_{C^0(I)} \to 0$. It is easy to see that $\phi_n\circ f_n \to \phi\circ f = g$, $\psi_n\circ f_n \to \psi\circ f = h$ in the $C^0$-norm. Since $\{\phi_n\circ f_n,\psi_n\circ f_n\}=0$ for all $n$, we have $\zeta(\phi_n\circ f_n + \psi_n\circ f_n) = \zeta(\phi_n\circ f_n) + \zeta(\psi_n\circ f_n)$ for all $n$. Passing to the limit and using the Lipschitz property of $\zeta$, we deduce that $\zeta(g+h) = \zeta(g) + \zeta(h)$. Thus $\zeta$ is additive on $C(f)$. The homogeneity is proved similarly. We conclude that $\zeta$ is linear on $C(f)$ and the proof is complete.

\end{proof}

The following lemma is straightforward and is left to the reader as an exercise.
\begin{lemma}\label{lem:homogeneity}
  If $(M,\omega)$ is a symplectic manifold and $\mu \fc \wt\Ham_c(W,\omega) \to \R$ is a stable homogeneous quasi-morphism, then for all $H \in C^\infty_c(W)$ and $s\in\R$ we have $\mu(\wt\phi_{sH}) = s\mu(\wt\phi_H)$. \qed
\end{lemma}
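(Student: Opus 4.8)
The plan is to establish the identity for integer $s$ first, using only homogeneity, then bootstrap to rational $s$, and finally to all real $s$ by a continuity argument furnished by stability. The crux is the elementary but essential observation that for an \emph{autonomous} Hamiltonian $H$ the flow obeys the rescaling relation $\phi_{sH}^t = \phi_H^{st}$, since both sides solve the same ODE $\dot x = s\,\sgrad H(x)$ with the same initial condition. Recalling that the group law on $\wt\Ham(W,\omega)$ is represented by pointwise multiplication of paths of Hamiltonian diffeomorphisms, and that $t\mapsto\phi_H^t$ is a one-parameter subgroup, I would compute
\begin{equation*}
  (\wt\phi_H)^n = \bigl[\,t\mapsto(\phi_H^t)^n\,\bigr] = \bigl[\,t\mapsto\phi_H^{nt}\,\bigr] = \bigl[\,t\mapsto\phi_{nH}^t\,\bigr] = \wt\phi_{nH}
\end{equation*}
for every $n\in\Z$, where the powers are taken in $\Ham(W,\omega)$. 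Applying the homogeneity of $\mu$ to the left-hand side then yields $\mu(\wt\phi_{nH}) = \mu\bigl((\wt\phi_H)^n\bigr) = n\,\mu(\wt\phi_H)$, which is the assertion for integer $s$.

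For rational $s = p/q$ with $q\in\N$ and $p\in\Z$, I would apply the integer case twice. First, using the Hamiltonian $K = (p/q)H$ and the integer $q$, the previous step gives $(\wt\phi_{(p/q)H})^q = \wt\phi_{qK} = \wt\phi_{pH}$; second, using $H$ and the integer $p$, we have $\wt\phi_{pH} = (\wt\phi_H)^p$. Evaluating $\mu$ and invoking homogeneity on both sides of $(\wt\phi_{(p/q)H})^q = (\wt\phi_H)^p$ gives $q\,\mu(\wt\phi_{(p/q)H}) = p\,\mu(\wt\phi_H)$, whence $\mu(\wt\phi_{(p/q)H}) = (p/q)\,\mu(\wt\phi_H)$.

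It remains to pass from rational to real $s$, and here stability does the work. Applying the stability inequality to the pair $sH$, $s'H$ and bounding the pointwise extrema by $|s-s'|\,\|H\|_{C^0}$, one obtains
\begin{equation*}
  |\mu(\wt\phi_{sH}) - \mu(\wt\phi_{s'H})| \leq |s-s'|\,\|H\|_{C^0}\,,
\end{equation*}
so the function $s\mapsto\mu(\wt\phi_{sH})$ is Lipschitz, in particular continuous, on $\R$. Since it agrees with the linear function $s\mapsto s\,\mu(\wt\phi_H)$ on the dense subset $\Q\subset\R$, the two coincide everywhere, which completes the argument. The only genuinely non-formal point is the identity $\wt\phi_{nH}=(\wt\phi_H)^n$ in the universal cover; once the rescaling relation for autonomous flows and the description of the group law on $\wt\Ham$ are in hand, the remainder is bookkeeping, so I do not anticipate a serious obstacle.
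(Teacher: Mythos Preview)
Your argument is correct. The paper does not supply a proof of this lemma at all---it is stated with a trailing \qedsymbol{} and the remark that it ``is straightforward and is left to the reader as an exercise''---so there is nothing to compare against; your three-step passage from integers to rationals to reals via the rescaling identity $\phi_{sH}^t=\phi_H^{st}$ for autonomous flows, homogeneity of $\mu$, and the Lipschitz bound coming from stability is exactly the intended exercise.
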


Lastly, we will use the following measure-theoretic result in the context of symplectic geometry. It is undoubtedly known to experts, but we were unable to find a reference in the literature, and thus an independent proof is presented here.
\begin{lemma}\label{lem:Ham_invt_measures}
  Let $(M,\omega)$ be a connected symplectic manifold, and let $\sigma$ be a compact-finite Borel measure on $M$, and which is invariant under the action of $\Ham(M,\omega)$. Then there is $\kappa\geq 0$ such that $\sigma = \kappa\Vol$.
\end{lemma}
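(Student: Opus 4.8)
The plan is to show that a $\Ham$-invariant compact-finite Borel measure $\sigma$ must be a constant multiple of the symplectic volume $\Vol = \omega^n/n!$. The key principle is that Hamiltonian diffeomorphisms act transitively enough on $M$ that invariance forces $\sigma$ to have the same local density everywhere with respect to $\Vol$, and that by a version of the Radon--Nikodym theorem this density must be an (almost everywhere) constant $\Ham$-invariant function, hence constant since $M$ is connected.

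First I would establish that $\sigma$ is absolutely continuous with respect to $\Vol$. The crucial input is that $\Ham(M,\omega)$ acts transitively on $M$ (indeed, it acts $k$-transitively for every $k$, by standard results on Hamiltonian dynamics on a connected symplectic manifold), and more importantly that it acts transitively on the collection of smoothly embedded balls of a fixed symplectic volume: given any two embedded Darboux balls $B_1, B_2$ of equal volume, there is $\varphi \in \Ham(M,\omega)$ with $\varphi(B_1) = B_2$. Invariance then gives $\sigma(B_1) = \sigma(\varphi(B_1)) = \sigma(B_2)$, so the $\sigma$-measure of an embedded ball depends only on its symplectic volume. I would first use this to rule out atoms and lower-dimensional concentration: if $\sigma$ gave positive mass to some $\Vol$-null set, invariance under the rich group $\Ham(M,\omega)$ would let one produce infinitely many disjoint translates of equal positive mass inside a fixed compact set, contradicting compact-finiteness. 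This yields $\sigma \ll \Vol$.

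Next, let $\rho = d\sigma/d\Vol \in L^1_{\mathrm{loc}}(M, \Vol)$ be the Radon--Nikodym density. Since both $\sigma$ and $\Vol$ are $\Ham$-invariant (the latter because Hamiltonian flows preserve $\omega$ and hence $\Vol$), the density $\rho$ is invariant under $\Ham(M,\omega)$ up to $\Vol$-null sets: for each $\varphi \in \Ham(M,\omega)$ we have $\rho \circ \varphi = \rho$ almost everywhere. The main point is to upgrade this to $\rho$ being almost everywhere equal to a genuine constant. I would do this via a Lebesgue-differentiation argument: for $\Vol$-almost every point $x$, $\rho(x)$ equals the limit of the averages $\sigma(B)/\Vol(B)$ over small balls $B$ shrinking to $x$; but by the preceding transitivity observation $\sigma(B)/\Vol(B)$ depends only on the volume of $B$, not on its location. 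Taking two Lebesgue points $x_1, x_2$ and comparing the limiting averages over equal-volume balls centered at each shows $\rho(x_1) = \rho(x_2)$. Hence $\rho \equiv \kappa$ for some constant $\kappa \geq 0$ almost everywhere, giving $\sigma = \kappa \Vol$.

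The main obstacle is the transitivity/homogeneity step: one must phrase precisely the statement that $\Ham(M,\omega)$ carries any embedded Darboux ball to any other of equal volume, and use it to control $\sigma$ on a basis of neighborhoods. This requires invoking the standard fact that the compactly supported Hamiltonian group acts transitively on equal-volume balls (a consequence of the connectedness of $M$ together with the fact that symplectomorphisms isotopic to the identity can be generated by Hamiltonian flows, plus Moser-type arguments to realize volume-preserving ball moves Hamiltonially within a Darboux chart). Care is also needed at the measure-theoretic level to ensure the Lebesgue-differentiation comparison is valid for the possibly non-regular $\sigma$; reducing to the regular (Radon) case, which is harmless since $M$ is a $\sigma$-compact manifold, handles this. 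Once transitivity on equal-volume balls is in hand, the remaining steps are routine real analysis.
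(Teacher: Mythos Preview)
Your Radon--Nikodym/Lebesgue-differentiation approach is genuinely different from the paper's, but as written the absolute continuity step has a real gap. You assert that a $\Vol$-null set $E$ with $\sigma(E)>0$ admits infinitely many pairwise disjoint Hamiltonian translates inside a fixed compact; after passing (via regularity) to a compact null $K\subset E$ in a Darboux chart, this would require infinitely many translations with the $K+t_i$ pairwise disjoint and bounded. But a compact null set can satisfy $K-K\supset B(0,\delta)$ --- take a Cantor set in one coordinate, where $C-C=[-1,1]$ --- so \emph{every} small translate of $K$ meets $K$, and the argument stalls. A second overstatement: $\Ham(M,\omega)$ does \emph{not} act transitively on all equal-volume Darboux balls in general (there are closed $4$-manifolds with non-isotopic equal-volume ball embeddings). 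You only need this for \emph{small} balls, where it does hold via transitivity on points plus a local Moser argument, so your Lebesgue-differentiation step survives once restricted accordingly. Absolute continuity can then be repaired from this same small-ball observation rather than from disjoint translates of null sets: once $\sigma(B_r)$ depends only on $r$ for small $r$, packing $\sim(R/r)^{2n}$ disjoint $r$-balls into an $R$-ball yields $\sigma(B_r)\le C\,r^{2n}$, and a Vitali covering argument gives $\sigma\ll\Vol$ locally.

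For comparison, the paper's proof sidesteps both issues by working with \emph{cubes} in a Darboux cube $(0,1)^{2n}$: compact subsets of a hyperplane have $\sigma$-measure zero because their translates in the perpendicular direction are \emph{automatically} pairwise disjoint (uncountably many, in a bounded region); hence cube boundaries are null, translated cubes have equal measure, and subdivision shows $\sigma(C)/\Vol(C)$ is constant on rational cubes; regularity then extends this to all Borel sets, and Darboux charts globalize. This is more hands-on but needs no Radon--Nikodym, no Lebesgue differentiation, and no transitivity statement for balls.
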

\noindent Here $\Vol$ stands for the Lebesgue measure induced by the volume form $\omega^{\wedge\frac12\dim M}$.
\begin{proof}
  We will prove the lemma in a sequence of steps. Throughout, by a cube in $\R^{2n}$ we mean a cube with sides parallel to the coordinate axes.

  \tb{Step 1:} The case of $M=(0,1)^{2n}\subset \R^{2n}$ with the restriction of the standard symplectic form. If $K\subset M$ is any compact set contained in a hyperplane, then $\sigma(K)=0$. This is because we can produce an uncountable collection of pairwise disjoint Hamiltonian translates of $K$, all of which have the same measure as $K$, forcing it to vanish. In particular, if $C\subset M$ is any cube, then $\sigma(\partial C) = 0$, whence $C$ and its interior have the same measure. Likewise, any linear translate of $C$ contained in $M$ can be obtained from $C$ by applying to it a Hamiltonian diffeomorphism of $M$, and thus it has the same measure as $C$. It follows that if we subdivide a closed cube $C\subset M$ into $N^{2n}$ pairwise congruent subcubes $(C_i)_{i=1}^{N^{2n}}$, $N\in \N$, then, since all the $C_i$ are translates of one another, they all have the same measure, and thus $\sigma(C)=N^{2n}\sigma(C_1)$, since the boundaries have measure zero.

  It follows that if $C,C'\subset M$ are cubes with rational sides, then
  $$\frac{\sigma(C)}{\sigma(C')} = \frac{\Vol(C)}{\Vol(C')}\,.$$
  Indeed, under the assumptions we can find a closed cube $D\subset M$ such that both $C$ and $C'$ are unions of translates of $D$ with pairwise disjoint interiors. It follows that there is $\kappa \geq 0$ such that $\sigma=\kappa\Vol$ on all cubes with rational sides.

  If $K\subset M$ is any compact set, then there exists a decreasing sequence $R_k$ of sets which are finite unions of closed cubes with rational sides and pairwise disjoint interiors, such that $K = \bigcap_kR_k$. It follows from \cite[Chapter 6, Proposition 1.3]{Stein_Shakarchi_Real_analysis_3} that a finite Borel measure on a compact metrizable space is regular. Since $\sigma$ is by assumption compact-finite, its restriction to $R_1$ is a finite Borel measure, and therefore it is regular, since, of course, $R_1$ is compact and metrizable. It follows that
  $$\sigma(K) = \lim_k\sigma(R_k) = \kappa\lim_k\Vol(R_k) = \kappa\Vol(K)\,.$$
  A regular Borel measure is determined by its values on compacts, and the first step is complete.

  \tb{Step 2:} $M \subset \R^{2n}$ is any cube. This follows from Step 1 by rescaling.

  \tb{Step 3:} $M\subset \R^{2n}$ is any connected open set. From the previous step it follows that if $C\subset M$ is any open cube, then there is $\kappa_C\geq 0$ such that $\sigma|_C = \kappa_C\Vol$. If $C'\subset M$ is another straight cube, then, of course, there is $\kappa_{C'}$ with $\sigma|_{C'} = \kappa_{C'}\Vol$. Since $M$ is connected, there is $\phi\in\Ham_c(M,\omega)$ with $\phi(C')\cap C\neq \varnothing$. Let $C''\subset C\cap\phi(C')$ be any straight cube. Thanks to the invariance of $\sigma$, we have $\sigma|_{\phi(C')} = \kappa_{C'}\Vol$. We have:
  $$\kappa_C\Vol|_{C''} = \big(\kappa_C\Vol|_C)|_{C''} = \big(\sigma|_C\big)|_{C''} = \sigma|_{C''} = \big(\sigma|_{\phi(C')}\big)|_{C''} = \big(\kappa_{C'}\Vol|_{\phi(C')}\big)|_{C''} = \kappa_{C'}\Vol|_{C''}\,,$$
  whence $\kappa_C=\kappa_{C'}$. Let us denote by $\kappa$ the common value of the constants $\kappa_C$ for all cubes $C\subset M$. Now there is a countable collection of closed cubes $C_i$ with pairwise disjoint interiors such that $M = \bigcup_iC_i$. It follows that for any Borel set $B\subset M$ we have
  $$\sigma(B) = \sum_i\sigma(B\cap C_i) = \sum_i\sigma|_{C_i}(B\cap C_i) = \sum_i\kappa\Vol(B\cap C_i) = \kappa\Vol(B)\,.$$

  \tb{Step 4:} There is $\kappa \geq 0$ such that $\sigma=\kappa\Vol$ on open connected subsets of $M$ contained in Darboux balls. Indeed, if $U\subset M$ is open, connected, and contained in a Darboux ball, let $\psi \fc U \to \R^{2n}$ be a symplectic embedding, and let $\tau=\psi_*\sigma$. Then $\tau$ is a Borel compact-finite measure on $\psi(U)$ which is invariant under Hamiltonian isotopies with compact supports in $\psi(U)$. By the previous step, there is $\kappa_U$ such that $\tau = \kappa_U\Vol$, whence $\sigma|_U = \kappa_U\Vol$. If $V\subset M$ is another such open set, we have $\sigma|_V=\kappa_V\Vol$. Using the same trick we had in the previous step for cubes, we obtain $\kappa_U = \kappa_V$.

  \tb{Step 5:} The general case. Let $\kappa$ be the constant from the previous step. It suffices to show that $\sigma = \kappa\Vol$ on compact subsets. If $K\subset M$ is compact, let $U_1,\dots,U_k\subset M$ be a collection of connected open sets, each one contained in a Darboux ball, such that $K\subset\bigcup_{i=1}^kU_i$. Then
  $$K = \bigsqcup_{i=1}^k\left(K\cap\bigg(U_i\setminus\bigcup_{j=1}^{i-1}U_j\bigg)\right)\,,$$
  whence
  \begin{multline*}
    \sigma(K) = \sum_{i=1}^{k}\sigma\left(K\cap\bigg(U_i\setminus\bigcup_{j=1}^{i-1}U_j\bigg)\right) = \sum_{i=1}^{k}\sigma|_{U_i}\left(K\cap\bigg(U_i\setminus\bigcup_{j=1}^{i-1}U_j\bigg)\right)
     \\ = \kappa\sum_{i=1}^{k}\Vol\left(K\cap\bigg(U_i\setminus\bigcup_{j=1}^{i-1}U_j\bigg)\right) = \kappa\Vol(K)\,.
  \end{multline*}
  The proof of the lemma is complete.
\end{proof}

We close this section with a proof of Proposition \ref{prop:compactness}. Before doing this, let us mention two ways the result can be extracted from the literature. Firstly, one can prove, based on \cite{Rustad_Unbounded_QIs}, that the space of globally Lipschitz quasi-integrals on $X$ is homeomorphic to the space of quasi-integrals on the one-point compactification of $X$. The compactness of the latter space has been known essentially from the foundations of the theory. See \cite{Aarnes_Pure_QS_Extremal_QM}, for instance. Another way is to use Grubb's compactness results for topological measures on locally compact spaces (there called quasi-measures), and then to prove that the representation theorem of Rustad relating topological measures and quasi-integrals in fact yields a homeomorphism between the two spaces. None of these ways is particularly difficult, but it would require the introduction of unnecessary additional concepts, and we doubt it would be simpler than the proof presented here.

\begin{proof}
  The basic idea is to use the Banach--Alaoglu theorem about the compactness of the unit ball in the dual space of a normed vector space with respect to the weak-* topology and the Tychonoff theorem about the compactness of products.

  For $f \in C_c(X)$, $C(f)$ is a Banach space with respect to the uniform norm, which is in fact isomorphic to the Banach space $C_0(\im f)$ of continuous functions on $\im f$ vanishing at zero, again with the uniform norm. Consider its continuous dual $C(f)'$ with the dual norm. The corresponding closed unit ball $B_1(f)\subset C(f)'$ is compact with respect to the weak-* topology, thanks to the Banach--Alaoglu theorem. The subspace of positive functionals
  $$C(f)'_+ = \{\rho\in C(f)'\,|\,\forall h\in C(f):h\geq 0\Rightarrow \rho(h) \geq 0\}\subset C(f)'$$
  is closed in the weak-* topology, and thus the positive part $B_1(f)_+ = B_1(f)\cap C(f)'_+$ of $B_1(f)$ is closed in it, and therefore compact in the weak-* topology.

  Consider the map
  $$\iota \fc \cQ\cI_1(X) \to \prod_{f\in C_c(X)}B_1(f)_+\,,\quad \iota(\eta) = \big(\eta|_{C(f)}\big)_f\,,$$
  where the product is endowed with the product topology, while each factor is given the weak-* topology. The factors being compact, the product is itself compact, thanks to the Tychonoff theorem. Therefore in order to prove that $\cQ\cI_1(X)$ is compact, it suffices to show that $\iota$ is a closed embedding, meaning $\im \iota$ is closed and $\iota$ is a homeomorphism onto $\im \iota$. This is accomplished in a series of steps.

  \tb{Step 1: $\iota$ is injective.} Let $\eta,\eta' \in \cQ\cI_1(X)$ satisfy $\eta|_{C(f)}=\eta'|_{C(f)}$ for all $f$; then
  $$\eta(f) = \big(\eta|_{C(f)}\big)(f) = \big(\eta'|_{C(f)}\big)(f) = \eta'(f)\,,$$
  whence $\eta = \eta'$.

  \tb{Step 2: $\iota$ is continuous.} Let $\eta_\nu \in \cQ\cI_1(X)$ be a net converging to $\eta \in \cQ\cI_1(X)$, meaning $\eta_\nu(f) \to\eta(f)$ for each $f \in C_c(X)$. For a fixed $f\in C_c(X)$, the restrictions $\eta_\nu|_{C(f)}$ then clearly converge to $\eta|_{C(f)}$ in the weak-* topology, which is just another name for the topology of pointwise convergence. Thus $\iota(\eta_\nu)$ converges to $\iota(\eta)$ in each coordinate, or, in other words, in the product topology, as required.

  \tb{Step 3: $\im\iota$ is compact.} It suffices to prove that it is closed, since the target space is already compact. Since the coordinates $\iota(\eta)_f=\eta|_{C(f)}$ of $\iota(\eta)$ are given by restrictions of the same functional, they satisfy the obvious coherence relations $\iota(\eta)_f|_{C(g)} = \iota(\eta)_g$. Let us therefore define the corresponding subspace
  $$\cZ = \bigg\{(\rho_h)_h \in \prod_{h\in C_c(X)}B_1(h)_+\,\Big|\,\forall f,g\in C_c(X):g\in C(f)\Rightarrow \rho_f|_{C(g)} = \rho_g\bigg\}\,.$$
  Clearly $\im\iota \subset \cZ$. We will show that $\cZ = \im\iota$ and that it is closed.

  Each individual coherence relation can be encoded in a suitable subspace, as follows. For $f,g\in C_c(X)$ such that $g \in C(f)$ consider
  $$\cZ_{f,g} = \bigg\{(\rho_h)_h \in \prod_{h\in C_c(X)}B_1(h)_+\,\Big|\,\rho_f|_{C(g)} = \rho_g\bigg\}\,.$$
  It is then obvious that
  $$\cZ = \bigcap_{f,g\in C_c(X):\,g\in C(f)}\cZ_{f,g}\,.$$
  To prove $\cZ$ is closed, it suffices to show that each $\cZ_{f,g}$ is. To see this, consider the maps
  $$\pr_g\,,\res_g \fc \prod_{h\in C_c(X)}B_1(h)_+ \to B_1(g)_+\,,\quad\text{defined by}$$
  $$\pr_g\big((\rho_h)_h\big)=\rho_g\text{ and }\res_g\big((\rho_h)_h\big) = \rho_f|_{C(g)}\,.$$
  The map $\pr_g$ is continuous thanks to the definition of the product topology. The map $\res_g$ is the composition of the continuous map $\pr_f$ with the restriction map $C(f)'\to C(g)'$ dual to the inclusion $C(g)\to C(f)$. The restriction map is, of course, continuous in the weak-* topology, as it is simply the topology of pointwise convergence. It follows that $\res_g$ is likewise continuous. Since the weak-* topology is Hausdorff, the diagonal $\Delta_{B_1(g)_+}\subset B_1(g)_+\times B_1(g)_+$ is closed, and thus so is its continuous preimage
  $$\cZ_{f,g} = \big(\pr_g,\res_g\big)^{-1}(\Delta_{B_1(g)_+})\,,\quad\text{where }\big(\pr_g,\res_g\big) \fc \prod_{h\in C_c(X)}B_1(h)_+ \to B_1(g)_+\times B_1(g)_+\,.$$
  It remains to show that $\cZ\subset\im\iota$. Let $(\rho_h)_h \in \cZ$ and define $\eta \fc C_c(X) \to \R$ by $\eta(f) = \rho_f(f)$. We must show that $\eta$ is indeed a $1$-Lipschitz quasi-integral, and that $\iota(\eta) = (\rho_h)_h$. To show $\eta$ is a quasi-integral, we need to prove its positivity and quasi-linearity. For positivity, if $f\geq 0$, then, since $\rho_f \in B_1(f)_+$, we have $\eta(f)=\rho_f(f) \geq 0$. For quasi-linearity, if $f \in C_c(X)$, we mush show that $\eta|_{C(f)}$ is linear. For this, it suffices to prove that $\eta|_{C(f)} = \rho_f$. Let $g \in C(f)$. Then by coherence $\eta(g) = \rho_g(g) = \big(\rho_f|_{C(g)}\big)(g) = \rho_f(g)$, as claimed. The $1$-Lipschitz property of $\eta$ follows from the fact that each $\rho_f$ has norm at most one, and thus is itself $1$-Lipschitz with respect to the uniform norm on $C(f)$. The fact that $\iota(\eta)=(\rho_h)_h$ is immediate from the definition of $\eta$.

  \tb{Step 4: $\iota$ is a homeomorphism onto its image.} It suffices to prove that if $\eta_\nu\in\cQ\cI_1(X)$ is a net, $\eta\in\cQ\cI_1(X)$, and $\iota(\eta_\nu) \to \iota(\eta)$, then $\eta_\nu\to \eta$. Fix $f \in C_c(X)$. Since $\iota(\eta_\nu) \to \iota(\eta)$, from the definition of product topology it follows that $\eta_\nu|_{C(f)} = \iota(\eta_\nu)_f  \to \iota(\eta)_f = \eta|_{C(f)}$ in the weak-* topology. In particular
  $$\eta_\nu(f) = \big(\eta_\nu|_{C(f)}\big)(f) \to \big(\eta|_{C(f)}\big)(f) = \eta(f)\,,$$
  as claimed. This finishes the proof.
\end{proof}

\subsection{Proof of Theorem \ref{thm:main}}\label{ss:proof}
  Let $\lambda$ be the Liouville form on $W$ and $Y$ the associated Liouville vector field given by $\iota_Y\omega=\lambda$. For $s\in \R$ let $\Delta_s$ denote the time-$s$ flow of $Y$. For $\phi \in \Ham(W,\omega)$ define
  $$\phi^{\Delta_s}:=\Delta_s\circ\phi\circ\Delta_s^{-1}\,.$$
  We have $\phi^{\Delta_s} \in \Ham(W,\omega)$, and in fact if $H$ is a time-dependent Hamiltonian on $W$, then $\phi_H^{\Delta_s}=\phi_{e^s\,H\circ\Delta_s^{-1}}$. This conjugation action naturally extends to Hamiltonian isotopies, and therefore to $\wt\Ham(W,\omega)$:
  $$\wt\Ham(W,\omega)\ni\wt\phi=[(\phi_t)_{t\in[0,1]}]\mapsto \wt\phi^{\Delta_s}:=\big[\big(\phi_t^{\Delta_s}\big)_{t\in[0,1]}\big] \in \wt\Ham(W,\omega)\,,$$
  where brackets denote the homotopy class of the isotopy. One can check that this action is well-defined. Moreover, each map $\wt\phi\mapsto\wt\phi^{\Delta_s}$ is an automorphism of $\wt\Ham(M,\omega)$.

  Let $\mu_s \fc \wt\Ham(W,\omega)\to\R$ be given by
  $$\mu_s(\wt\phi) = e^{-s}\mu(\wt\phi^{\Delta_s})\,.$$
  Since composing a quasi-morphism on a group with an automorphism yields a quasi-morphism with the same defect, we see that $\mu_s$ is a homogeneous quasi-morphism with defect $D(\mu_s) = e^{-s}D(\mu)$.

  Consider the functionals $\zeta,\zeta_s \fc C^\infty_c(W) \to \R$ given by
  $$\zeta_s(H) = \mu_s(\wt\phi_H)\quad\text{and}\quad \zeta=\zeta_0\,.$$
  By Proposition \ref{prop:QIs_from_QMs}, these are $1$-Lipschitz quasi-integrals on $W$. We have
  $$\zeta_s(H) = \mu_s(\wt\phi_H)=e^{-s}\mu(\wt\phi_{e^sH\circ\Delta_s^{-1}})\stackrel{*}{=}\mu(\wt\phi_{H\circ\Delta_s^{-1}})=\zeta(H\circ\Delta_s^{-1}) = (\Delta_{s*}\zeta)(H)\,,$$
  where $\stackrel{*}{=}$ follows from Lemma \ref{lem:homogeneity}. Thanks to Proposition \ref{prop:QIs_from_QMs}, all the $\zeta_s$ are $\Ham$-invariant. By Corollary \ref{cor:Ham_invt_QIs_compact}, the collection of $\Ham$-invariant $1$-Lipschitz quasi-integrals is compact. It follows that there is a subnet\footnote{We opted to use here the characterization of compactness in terms of nets. While it is true that the space of globally Lipschitz quasi-integrals is metrizable, and thus compactness can be characterized in terms of sequences, a proper discussion of this would take us too far afield. While the notion of subnets is slightly unconventional, for this paper there is no difference from subsequences in practical terms.} $s_\nu\to\infty$ and a $\Ham$-invariant $1$-Lipschitz quasi-integral $\eta$ such that $\zeta_{s_\nu} \to \eta$. By Proposition \ref{prop:QIs_from_QMs} we have, for all $G,H \in C^\infty_c(W)$:
  $$|\zeta_{s_\nu}(G+H) - \zeta_{s_\nu}(G) - \zeta_{s_\nu}(H)| \leq \sqrt{2D(\mu_{s_\nu}) \|\{G,H\}\|_{C^0}} = \sqrt{2e^{-{s_\nu}}D(\mu)\|\{G,H\}\|_{C^0}}\,.$$
  Taking the limits of both sides, we arrive at the inequality
  $$|\eta(G+H) - \eta(G) - \eta(H)| \leq 0\,,$$
  meaning $\eta$ is globally linear, that is $\eta$ is the integration functional with respect to a Radon measure $\sigma$, see Example \ref{ex:linear_QIs}. Since $\eta$ is $\Ham$-invariant, so is $\sigma$. By Lemma \ref{lem:Ham_invt_measures}, the only $\Ham$-invariant compact-finite Borel measures on $W$ are multiples of volume. Therefore $\sigma$, being a $\Ham$-invariant Radon measure, thus in particular a compact-finite Borel measure, is a multiple of volume. However, $\eta$ is also $1$-Lipschitz, which means that the total mass of $\sigma$ is at most $1$, again see Example \ref{ex:linear_QIs}. Since $W$ is a complete Liouville manifold, it has infinite volume, and thus the only possible multiple of volume for $\sigma$ is zero. We conclude that $\eta = 0$.

  We will now show that $\eta=0$ implies that $\mu=0$. Our definition of Liouville manifolds is that $W$ is exhausted by a sequence of Liouville domains $W_k$, that is each $W_k$ is a codimension zero compact connected submanifold with boundary such that $Y$ is everywhere positively transverse to $\partial W_k$, such that $W_k\subset \Int W_{k+1}$ for each $k$, and such that $W = \bigcup_k W_k$. Let us fix such a sequence of domains $W_k$. For each $k$ let us define
  $$\wh W_k = \bigcup_{s\geq 0}\Delta_s(W_k)\,.$$
  This is an open submanifold of $W$, which is symplectomorphic to the completion of the Liouville domain $W_k$. The Liouville flow allows us to embed the manifold $\R\times \partial W_k$ into $\wh W_k$, and therefore into $W$, namely we have the embedding $\R\times \partial W_k \to W$, $(s,w)\mapsto \Delta_s(w)$.

  Fix now a smooth nonincreasing function $\chi \fc \R \to \R$ such that $\chi|_{(-\infty,0]}\equiv 1$ and $\chi|_{[1,\infty)}\equiv 1$. We will construct from it a family of functions $\chi_{k,s} \in C^\infty_c(W)$. First, define $\chi_k$ as follows. Consider the above diffeomorphism $\R\times\partial W_k \to \wh W_k\setminus \Sigma_k$, $(s,w)\mapsto \Delta_s(w)$, where $\Sigma_k = \bigcap_{s\leq 0}\Delta_s(W_k)$ is the Liouville skeleton of $W_k$. On the image $\wh W_k\setminus\Sigma_k$ of this diffeomorphism, $\chi_k$ is defined as the composition of the inverse of the diffeomorphism with the projection $\R\times \partial W_k \to \R$. On $\Sigma_k$, $\chi_k\equiv 1$, and on the rest of $W$, $\chi_k\equiv 0$. It is easy to see that $\chi_k$ is indeed a smooth function with compact support. For $s\in \R$ set $\chi_{k,s}=\chi_k\circ \Delta_s^{-1}$.

  By construction, for fixed $k$, $(\chi_{k,s})_{s\in\R}$ is a nondecreasing family of functions. It follows that $\zeta(\chi_{k,s})$ is a nondecreasing function of $s$. We have
  $$\zeta(\chi_{k,s}) = \zeta(\chi_k\circ\Delta_s^{-1}) = \zeta_s(\chi_k)\,.$$
  Recall the net $s_\nu \to \infty$ for which $\zeta_{s_\nu}\to\eta = 0$. In particular we have $\zeta_{s_\nu}(\chi_k) \to \eta(\chi_k) = 0$. We claim that this implies that $\zeta(\chi_k) = 0$. Indeed, let $\epsilon > 0$. From the definition of convergence of nets, there is $\nu_0$ such that for all $\nu\succeq\nu_0$ we have $|\zeta_{s_\nu}(\chi_k)| \leq \epsilon$. Since $s_\nu\to \infty$, we can in particular choose $\nu_0$ so that $\nu\succeq \nu_0$ implies $s_\nu \geq 0$. It then follows that for all such $\nu$ we have
  $$\zeta(\chi_k) = \zeta_0(\chi_k) \stackrel *\leq \zeta_{s_\nu}(\chi_k) \leq \epsilon\,,$$
  where $\stackrel*\leq$ is because $\zeta_s(\chi_k)$ is a nondecreasing function of $s$ and $s_\nu \geq 0$ for $\nu\succeq \nu_0$. Since $\epsilon$ was chosen arbitrary, this implies that indeed we have $\zeta(\chi_k) = 0$.

  Now, if $H$ is any compactly supported time-dependent Hamiltonian on $W$, its support is contained in some $W_k$, and, taking $b=\max_{W\times[0,1]}H$ and $a=\min_{W\times[0,1]}H$, we have $a\chi_k \leq H_t \leq b\chi_k$ for all $t$, whence by the stability of $\mu$:
  $$0=\zeta(a\chi_k) = \mu(\wt\phi_{a\chi_k}) \leq \mu(\wt\phi_H) \leq \mu(\wt\phi_{b\chi_k}) = \zeta(b\chi_k) = 0\,,$$
  implying $\mu(\wt\phi_H) = 0$. The proof is complete.

\bibliographystyle{alpha}
\bibliography{bibfile}

\noindent
\begin{tabular}{l}
University of Haifa \\
Department of Mathematics \\
Faculty of Natural Sciences \\
3498838, Haifa, Israel \\
\&\\
MI SANU \\
Kneza Mihaila 36 \\
Belgrade 11001\\
Serbia\\
{\em E-mail:}  \texttt{frol.zapolsky@gmail.com}
\end{tabular}

\end{document}